\DeclareMathAlphabet{\mathcal}{OMS}{cmsy}{m}{n}
\newtheorem{theorem}{Theorem}[section]
\newtheorem{proposition}{Proposition}[section]
\newcommand{\R}{\mathbb{R}}
\newcommand{\bu}{\mathbf{u}}
\newcommand{\bw}{\mathbf{w}}
\newcommand{\bC}{\mathbf{C}}
\newcommand{\bF}{\mathbf{F}}
\newcommand{\bg}{\mathbf{g}}
\newcommand{\bL}{\mathbf{L}}
\newcommand{\bV}{\mathbf{V}}
\newcommand{\bx}{\mathbf{x}}
\newcommand{\bv}{\mathbf{v}}
\newcommand{\balpha}{\boldsymbol{\alpha}}
\newcommand{\diver}{\mathrm{div}\,}
\numberwithin{equation}{section}
\title[Existence and uniqueness of bioconvective flow model]
{A result on the existence and uniqueness of stationary solutions
for a bioconvective flow model}
\author[A. Coronel]{An{\'\i}bal\ Coronel$^\dag$}
\author[L. Friz]{Luis Friz$^\dag$}
\author[I. Hess]{Ian Hess$^\dag$}
\author[A. Tello]{Alex Tello$^\dag$}
\thanks{$^\dag$
GMA, Departamento de Ciencias B\'asicas, Facultad de Ciencias,
Universidad del B\'{\i}o-B\'{\i}o, Campus Fernando May, Chill\'{a}n, Chile.}
\email{acoronel@ubiobio.cl,lfriz@ubiobio.cl,ihess@alumnos.ubiobio.cl,alextello21@gmail.com}
\date{\today}
\keywords{bioconvection problem, chemotaxis fluid coupling, oxigentaxis}
\begin{document}

\begin{abstract}
In this note we prove the existence and uniqueness of weak solutions 
for the boundary value problem modelling the stationary case of 
the bioconvective flow problem introduced
by Tuval et. al. (2005, {\it PNAS} 102, 2277--2282).
We derive some appropriate a priori estimates for the weak solution,
which implies 
the existence, by application of  Gossez theorem, and the uniqueness
by standard methodology of comparison of two arbitrary solutions.

\end{abstract}
\maketitle

\numberwithin{equation}{section}
\newtheorem{thm}{Theorem}[section]
\newtheorem{lem}[thm]{Lemma}
\newtheorem{prop}[thm]{Proposition}
\newtheorem{cor}[thm]{Corollary}
\newtheorem{defn}{Definition}[section]
\newtheorem{conj}{Conjecture}[section]
\newtheorem{exam}{Example}[section]
\newtheorem{rem}{Remark}[section]
\allowdisplaybreaks

% \tableofcontents

\section{Introduction}

The bioconvection is an important process in the biological treatment and in the life
of some microorganisms. In a broad sense, the biconvection is originated by 
the concentration of upward swimming microorganisms in a culture fluid.  
It is well known that, under some
physical assumptions, the process can be described by a 
mathematical models which are called bioconvective flow models.
The first model of this kind was derived by Y. Moribe \cite{moribe_1973}
and independently by M. Levandodovsky, W. S. Hunter and
E. A. Spiegel \cite{levandowsky_1975} (see also \cite{kanon} for the mathematical
analysis). In that models the unknowns are the velocity of the fluid,
the pressure of the fluid and the local concentration of microorganisms. 
More recently, Tuval et. al \cite{tuval_2005} have bee introduced 
a new bioconvective flow model considering also as an unknown variable the oxygen 
concentration. Some advances in mathematical analysis and some numerical results
of this new model are presented
in \cite{liu_2011} and \cite{lee_2015}, respectively.

In this note, we are interested with the existence and uniqueness of solutions
for the stationary problem associated to
bioconvective system given in~\cite{tuval_2005}
when the physical domain is a three-dimensional chamber~\cite{lee_2015}
(a parallelepiped).
Thus,
the stationary bioconvective flow problem to be analyzed is formulated 
as follows.
Given the external force $\mathbf{F}$,
the source functions $f_n,f_c$ and the dimensionless function 
$r$ find the velocity of the fluid $\mathbf{u}=(u_1,u_2,u_3)^t$, 
the fluid pressure $p$,
the local concentration of bacteria $n$ and  the local
concentration of oxygen $c$ satisfying the boundary value problem
\begin{align}
-S_c \Delta\bu+(\bu\cdot\nabla)\bu+S_c \nabla p&=\gamma S_c n\bg
+\mathbf{F},
&& \mbox{in $\Omega:=\prod_{i=1}^3[0,L_i]$},
\label{eq:navstokes}\\
\diver (\bu)&=0,
&&  \mbox{in $\Omega$},
\label{eq:diver}\\
-\Delta n+(\bu\cdot\nabla)n+\chi\,\diver(n\,
r(c)\nabla c)&=f_n,
&& 
\mbox{in $\Omega$},
\label{eq:bacter}\\
 -\delta \Delta c+(\bu\cdot\nabla)c+\beta \, r(c)n&=f_c,
 && 
 \mbox{in $\Omega$},
 \label{eq:oxigeno}\\
\nabla c\cdot \boldsymbol{\nu}
=\nabla n\cdot \boldsymbol{\nu}&=0,\;\; \bu=0,
&& 
 \mbox{on $\partial\Omega_L$ ($x_3=0$)},
\label{eq:fronterainf}
\\
\chi\, n\, r(c) \nabla c\cdot \boldsymbol{\nu}
-\nabla n\cdot \boldsymbol{\nu}
&=0,\;\; \bu=0,
&& 
 \mbox{on $\partial\Omega_U:=\partial\Omega-\partial\Omega_L$.}
 \label{eq:fronterasup}
% \\
%\int_{\Omega}n(\bx)d\bx &=V_b.  \label{eq:concet}
\end{align}
Here $ \boldsymbol{\nu}$ is the unit external normal
to $\partial\Omega$;
$\mathbf{g}=(0,0,-g)$ is the gravity with $g$
the acceleration of gravity constant; and $S_c,\gamma,\alpha,\delta$
and $\beta$ are some physical parameters defined as follows
\begin{align*}
S_c=\frac{\eta}{D_n\rho},\;\;
\gamma=\frac{V_bn_r(\rho_b-\rho)L^3}{\eta D_n},\;\;
\chi=\frac{\overline{\chi} c_{air}}{D_n},\;\;
\delta=\frac{D_c}{D_n},\;\;
\beta=\frac{kn_rL^2}{c_{air}D_n},
\end{align*}
 with $\eta$ the fluid viscosity,
 $D_n$ the diffusion constant for bacteria,
 $D_c$ the diffusion constant for oxygen,
 $\rho$
the fluid density,
 $\rho_b$ the bacterial density, $V_b>0$ the bacterial volume,
$n_r$ a characteristic cell density, $L$ a characteristic length,
$\overline{\chi} $ the chemotactic sensitivity, $c_{air}$
the oxygen concentration above the fluid and $k$
is the oxygen consumption rate.

We consider the standard notation of the Lebesgue and Sobolev spaces
which are used in the analysis of Navier-Stokes and related equations
of fluid mechanics, see \cite{adams_book,boyer_book,ene_book,ladyzhenskaya_book,temam_1977}
for details of
specific definitions. 
In particular, 
% $L^p(\Omega)$, $p\geq 1$, denotes
% the space of all functions
% from $\Omega$ to $\mathbb{R}$, which are $p$-integrable in the sense
% of Lebesgue and doted the standard norm;
% $W^{m,p}(\Omega)$ for
% $m\in\mathbb{N}$ and $p\geq1$
% denotes the usual Sobolev spaces;
% and by bold characters 
% $\bC^\infty(\Omega)$, $\bL^p(\Omega)$ and $\bW^{m,p}(\Omega)$ we
% denote the vector valued spaces (defined in the component wise sense)
% $[\bC^\infty(\Omega)]^3$, $[L^p(\Omega)]^3$ and
% $[W^{m,p}(\Omega)]^3$, respectively. Moreover,
we use the following rather common spaces notation
\begin{align*}
&H^{m}(\Omega)=W^{m,2}(\Omega),
\quad
\tilde{H}^1(\Omega)=\left\{f\in H^1(\Omega):\int_{\Omega}fd\bx=0\right\},
\quad
H^{1}_0(\Omega)=\overline{\bC^\infty_{0} (\Omega)}^{\|\cdot\|_{H^1(\Omega)}},
\\
&\bC^\infty_{0,\sigma} (\Omega)=
 \Big\{ \bv \in (C^\infty_0
(\Omega))^3 : \ \operatorname{div}(\bv) = 0  \Big\},
\quad
\bV=\overline{\bC^\infty_{0,\sigma} (\Omega)}^{\|\cdot\|_{H^1_0(\Omega)}},
\end{align*}
where $\overline{A}^{\|\cdot\|_{B}}$ 
denotes the completation of $A$
in $B$. Also ,we consider the notation for the applications
$a_0:\bV \times \bV\to\R,$ $a:H^1(\Omega) \times H^1(\Omega)\to\R,$
$b_0:\bV \times \bV \times \bV\to\R$ and
$b:\bV \times H^1(\Omega) \times H^1(\Omega) \to\R$, which are 
defined as follows
\begin{align*}
a_0(\bu, \bv) =(\nabla\bu,\nabla\bv),
\;\;
a(\phi, \psi) =(\nabla\phi,\nabla\psi),
\;\;
b_0(\bu, \bv, \bw) = ((\bu\cdot\nabla) \bv, \bw),
\;\;
b (\bu, \phi, \psi) = (\bu\cdot \nabla \phi,\psi),
\end{align*}
where $(\cdot,\cdot)$ is the standard inner product in $L^2(\Omega)$
or  $\bL^2(\Omega)$.
It is well known that  $a_0$ and $a$ are bilinear
coercive forms,  $b_0$ and $b$
are well defined trilinear forms with the following properties:
\begin{align}
b_0 (\bu, \bv, \bw) = -b_0 (\bu, \bw, \bv), 
\quad
b (\bu, \phi, \psi)= -b (\bu, \psi, \phi),
\quad
b_0 (\bu, \bv, \bv)&=0, &b (\bu, \phi, \phi)&=0,
\label{eq:prop_B0_B1:2}
\end{align}
for all $\bu, \bv, \bw\in \bV$ and $\psi, \phi\in  H^1(\Omega).$
Moreover, we need to introduce some  notation related with
some useful Sobolev inequalities and estimates
for $b$ and $b_0$.
There exists $C_{poi}>0,$ $C_{tr}>0$ and $C_1$ depending only on $\Omega$ such that
\begin{align*}
&\|\bu\|_{\bL^2(\Omega)}\le C_{poi} \|\bu\|_{\bV},\quad 
\|c\|_{L^2(\Omega)}\le C_{poi} \|c\|_{\tilde{H}^1(\Omega)},\quad
\|\varphi\|_{L^1(\partial\Omega)}\le C_{tr} \|\varphi\|_{W^{1,1}(\Omega)},
\\
&|b_0(\bu,\bv,\bw)|\le C_1 \|\bu\|_{\bV}\|\bv\|_{\bV}\|\bw\|_{\bV},\quad 
|b(\bu,c,n)|\le C_1 \|\bu\|_{\bV}\|c\|_{\tilde{H}^1(\Omega)}\|n\|_{\tilde{H}^1(\Omega)},
\end{align*}
for all $\bu,\bv,\bw\in \bV$, $c,n\in \tilde{H}^1(\Omega)$ and $\varphi\in W^{1,1}(\Omega)$.
For details on Poincar\'e and trace inequalities we refer to~\cite{boyer_book} and 
for the estimates of  $b_0$ and $b$ consult~\cite{temam_1977}.

The main result of the paper is the existence and
uniqueness of weak solutions for
\eqref{eq:navstokes}-\eqref{eq:fronterasup}. Indeed,
let us introduce some appropriate  notation
\begin{align}
&\Theta_1:=\frac{1-C_{tr}}{1-C_{tr}-2\chi\|r\|_{L^1(\mathbb{R})}C_{tr}C_{poi}},
 \quad \Theta_2:=\frac{1-C_{tr}}{1-C_{tr}-C_{tr}C_{poi}},
\label{eq:tetas}
\\
&
\Gamma_0= \frac{|\Omega|\Theta_1C_{poi}}
{|\Omega| -\chi \beta \alpha_1\|r\|^2_{L^\infty(\mathbb{R})}
 C^2_{poi}\Theta_1\Theta_2}
 \left[
  \frac{\chi \alpha_1 \|r\|^2_{L^\infty(\mathbb{R})}\Theta_2}
 {\delta |\Omega|}\|f_c\|_{L^2(\Omega)}+\|f_n\|_{L^2(\Omega)}
 \right],
 \label{eq:gamma0}
 \\
&
\Gamma_1=\frac{\gamma S_c g  C_{poi}}
{S_c-C_1C_{poi}(\gamma g\Gamma_0+\|\bF\|_{\bL^2(\Omega)})},
 \quad 
 \Gamma_2=\frac{1-C_{tr}}{1-2\|r\|_{L^1(\R)}(1-C_{tr}+C_{tr}C_{poi})},
  \label{eq:gamma12}
\\
&
 \Gamma_3=\frac{1-C_{tr}}
 {\delta(1-C_{tr}-C_{tr}C_{poi})-(C_1)^3\|r\|_{Lip(\R)}
 \Gamma_0},
 \label{eq:gamma3}
\end{align}
such that the result is precised as follows:

\begin{theorem}
\label{teo:existence}
Let us consider that  $f_c,f_b\in L^2(\Omega),$
$\mathbf{F}\in \mathbf{L}^2(\Omega)$ and $\overline{n}$, the average of $n$ on $\Omega$,
are given.
Also consider the notation \eqref{eq:tetas}-\eqref{eq:gamma3}.
If we assume that, the following assumptions 
\begin{align}
r\in L^\infty(\mathbb{R})\cap  L^1(\mathbb{R}),\quad 
 1-C_{tr}>C_{tr}C_{poi}\max\{2\chi\|r\|_{L^1(\mathbb{R})},1\},\quad 
 1>\chi \beta \overline{n}\|r\|^2_{L^\infty(\mathbb{R})}
 C^2_{poi}\Theta_1\Theta_2,
 \label{eq:hip_exist}
\end{align}
are satisfied, there is 
$(\bu,p,n,c)\in \bV\times H^1(\Omega)
\times H^1(\Omega)\times H^1(\Omega) $ satisfying
(\ref{eq:navstokes})-(\ref{eq:fronterasup}). Moreover,
if we consider that additionally $r\in {\rm Lip}(\mathbb{R})$
and the following inequalities 
\begin{align}
&S_c-C_1C_{poi}(\gamma g\Gamma_0+\|\bF\|_{\bL^2(\Omega)})>0,
\quad
\delta (1-C_{tr}-C_{tr}C_{poi})-(C_1)^3
\|r\|_{L^1(\mathbb{R})}\Gamma_0>0,
\label{eq:hip_unique_1}
\\
&C_1 \|r\|_{{\rm Lip}(\mathbb{R})}\Gamma_0<1,
\;\;\;
\Pi=\Gamma_1\Gamma_2\left\{
C_1\Gamma_0+
\frac{\|r\|_{L^\infty(\mathbb{R})}C\Gamma_3\Theta_2C_{poi}}
{\delta\big(1-C_1\|r\|_{{\rm Lip}(\mathbb{R})}\Gamma_0\big)}
\Big[
\beta C_{poi}\|r\|_{L^\infty(\mathbb{R})}\Gamma_0+
\|f_c\|_{L^2(\Omega)}
\Big]
\right\}<1,
\label{eq:hip_unique_2}
\end{align}
are satisfied, the weak solution is unique.
\end{theorem}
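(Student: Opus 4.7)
The plan is to recast (\ref{eq:navstokes})--(\ref{eq:fronterasup}) as a mixed variational problem, to derive uniform a priori estimates for a one-parameter family of regularized problems, to conclude existence via Gossez's pseudomonotone fixed-point theorem, and finally to obtain uniqueness by subtracting two solutions and exploiting the smallness conditions (\ref{eq:hip_unique_1})--(\ref{eq:hip_unique_2}). Since (\ref{eq:bacter}) is in conservation form and the prescribed average $\overline{n}$ fixes the otherwise-undetermined additive constant, I would split $n=\overline{n}+\tilde n$ with $\tilde n\in\tilde H^1(\Omega)$. Integration by parts together with (\ref{eq:fronterainf})--(\ref{eq:fronterasup}) makes all boundary contributions vanish, and the weak problem reads: find $(\bu,\tilde n,c)\in \bV\times\tilde H^1(\Omega)\times H^1(\Omega)$ with
\[
S_c\,a_0(\bu,\bv)+b_0(\bu,\bu,\bv)=\gamma S_c(n\bg,\bv)+(\bF,\bv),
\]
\[
a(\tilde n,\psi)+b(\bu,\tilde n,\psi)-\chi(n\,r(c)\nabla c,\nabla\psi)=(f_n,\psi),
\]
\[
\delta\,a(c,\varphi)+b(\bu,c,\varphi)+\beta(r(c)n,\varphi)=(f_c,\varphi),
\]
for every admissible test triple.

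For the a priori estimates I would proceed in three steps. Testing the oxygen equation with $c-\overline{c}$ and using $b(\bu,c,c)=0$ together with the Poincar\'e and trace inequalities yields a linear bound for $\|c\|_{H^1(\Omega)}$ in terms of $\|f_c\|_{L^2(\Omega)}$ and $\|n\|_{L^2(\Omega)}$ with amplification $\Theta_2$, the factor $\Theta_2$ encoding the cost of rerouting the boundary contribution of $r(c)n$ into the interior. Testing the bacteria equation with $\tilde n$, bounding the chemotactic integral by $\chi\|r\|_{L^\infty(\R)}\|n\|_{L^2(\Omega)}\|\nabla c\|_{L^2(\Omega)}\|\nabla\tilde n\|_{L^2(\Omega)}$ and applying Young yields the analogous bound for $\|\tilde n\|_{\tilde H^1(\Omega)}$ with amplification $\Theta_1$. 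Feeding both estimates into each other and noting that the third hypothesis in (\ref{eq:hip_exist}) is precisely what is required to invert the resulting $2\times 2$ linear-in-norms system gives $\max\{\|\tilde n\|_{\tilde H^1(\Omega)},\|c\|_{H^1(\Omega)}\}\le \Gamma_0$ as in (\ref{eq:gamma0}). Testing (\ref{eq:navstokes}) by $\bu$, using $b_0(\bu,\bu,\bu)=0$ together with the bound for $n$, then produces $\|\bu\|_\bV\le\Gamma_1$ from (\ref{eq:gamma12}).

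For existence I would encode the system as a fixed-point equation for a compact bounded operator $T(\bu^\ast,n^\ast,c^\ast)=(\bu,n,c)$ on $\bV\times\tilde H^1(\Omega)\times H^1(\Omega)$, defined by solving the linear Stokes, convection-diffusion-reaction and convection-diffusion-chemotaxis subproblems with all nonlinearities frozen at the starred triple. Compactness follows from the Rellich-Kondrachov embedding $H^1\hookrightarrow L^p$ for $p<6$, and continuity is routine. Applied to fixed points of $\sigma T$ for $\sigma\in[0,1]$, the a priori estimates above confine all such points to a common ball of radius $\Gamma_0+\Gamma_1$, so Gossez's pseudomonotone fixed-point theorem yields a fixed point, hence a weak solution; the pressure $p$ is recovered by the usual de Rham argument. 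For uniqueness, letting $(\bu_i,n_i,c_i)$, $i=1,2$, be two weak solutions and denoting their differences by $(\bu,\tilde n,c)$, I would test the difference equations by the differences themselves. The decisive cross terms are $b_0(\bu,\bu_2,\bu)$, controlled by $C_1\Gamma_1\|\bu\|_\bV^2$; the reaction difference $\beta(r(c_1)-r(c_2))n_1$, controlled through $r\in\mathrm{Lip}(\R)$ by the factor $\|r\|_{\mathrm{Lip}(\R)}\Gamma_0$; and the chemotactic splitting $\chi\diver\!\bigl[(n_1-n_2)r(c_1)\nabla c_1+n_2(r(c_1)-r(c_2))\nabla c_1+n_2 r(c_2)\nabla(c_1-c_2)\bigr]$. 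The first and second inequalities in (\ref{eq:hip_unique_1}) secure strict coercivity of the velocity and oxygen blocks once these cross terms have been absorbed, and (\ref{eq:hip_unique_2}) assembles the residual nonlinear contributions into a single contraction constant $\Pi<1$ forcing $(\bu,\tilde n,c)=0$.

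The main obstacle I anticipate is the coupled a priori estimate for the $(n,c)$ pair: the chemotactic divergence injects $\nabla c$ into the $n$ equation while the consumption term injects $n$ into the $c$ equation, so the uniform bound $\Gamma_0$ only emerges after carefully tracking how the trace constant $C_{tr}$ (produced by the Robin-type condition on $\partial\Omega_U$) interacts with $C_{poi}$ and the $L^\infty$ and $L^1$ norms of $r$, and after imposing the decoupling inequality $1>\chi\beta\overline{n}\|r\|^2_{L^\infty(\R)}C^2_{poi}\Theta_1\Theta_2$. The parallel difficulty in the uniqueness step is to collect every nonlinear contribution of the differences into the single contraction constant $\Pi<1$ of (\ref{eq:hip_unique_2}).
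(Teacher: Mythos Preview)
Your overall architecture (variational formulation, coupled a~priori estimates, Gossez for existence, subtraction for uniqueness) matches the paper, but the step on which everything hinges --- the a~priori bound for $\tilde n$ --- has a genuine gap.

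You propose to control the chemotactic term by
\[
\bigl|(n\,r(c)\nabla c,\nabla\tilde n)\bigr|\le \chi\|r\|_{L^\infty(\R)}\|n\|_{L^2(\Omega)}\|\nabla c\|_{L^2(\Omega)}\|\nabla\tilde n\|_{L^2(\Omega)},
\]
but this is not a valid H\"older estimate: three $L^2$ factors have exponents summing to $3/2$, not $1$. In three dimensions there is no way to close this trilinear term using only $H^1$ norms and $\|r\|_{L^\infty}$, because $\nabla c$ and $\nabla\tilde n$ are merely $L^2$. The paper circumvents this by a structural trick you are missing: after splitting $n=\hat n+\alpha_1/|\Omega|$, the dangerous piece is rewritten as
\[
\hat n\,r\!\Bigl(\hat c+\tfrac{\alpha_2}{|\Omega|}\Bigr)\nabla\hat c\cdot\nabla\hat n
=\nabla\Bigl[\int_0^{\hat c}r\!\Bigl(m+\tfrac{\alpha_2}{|\Omega|}\Bigr)\,dm\Bigr]\cdot\nabla\Bigl(\tfrac{\hat n^2}{2}\Bigr),
\]
and then integrated by parts. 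The antiderivative is uniformly bounded by $\|r\|_{L^1(\R)}$, so the whole term reduces to a boundary integral $\int_{\partial\Omega}|\hat n\,\nabla\hat n\cdot\nu|\,dS$, which is handled by the trace estimate $\|\varphi\|_{L^1(\partial\Omega)}\le C_{tr}\|\varphi\|_{W^{1,1}(\Omega)}$. This is precisely why $\Theta_1$ in \eqref{eq:tetas} carries the combination $2\chi\|r\|_{L^1(\R)}C_{tr}C_{poi}$ (not $\|r\|_{L^\infty}$), and why the second hypothesis in \eqref{eq:hip_exist} takes that form. Without this device the constants $\Theta_1,\Gamma_0$ cannot be produced and the coupled $(n,c)$ system does not close. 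Your remark that ``all boundary contributions vanish'' is also inaccurate: the oxygen equation retains the flux term $\delta\int_{\partial\Omega_U}\nabla c\cdot\boldsymbol{\nu}\,\varphi\,dS$, and the same trace inequality applied to $|\hat c\,\nabla\hat c\cdot\nu|$ is what generates $\Theta_2$.

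A secondary point: the existence step in the paper is not a Leray--Schauder/compact fixed-point argument as you sketch. The paper applies Gossez's theorem directly: it defines $G:\bV\times\tilde H^1\times\tilde H^1\to(\bV\times\tilde H^1\times\tilde H^1)'$ encoding all three equations (with weights $\lambda_1,\lambda_2,\lambda_3$), verifies weak--weak continuity, and shows that $\langle\langle G(\bu,n,c),(\bu,n,c)\rangle\rangle>0$ on a sphere of suitable radius by choosing the $\lambda_i$ so that the three quadratic forms arising from the cross terms are positive definite. Your fixed-point route could in principle be made to work, but it is a different argument and would still require the antiderivative trick above to obtain the uniform bounds.
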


A similar results are derived in \cite{boldrini,capatina1997} in the case of bioconvection problem 
when the concentration of oxygen is assumed to be constant. In the case of \cite{boldrini} the 
proof is based on the application of Galerkin approximation and in \cite{capatina1997}
on the application of  Gossez theorem. Moreover, other related results are given in 
\cite{liu_2011,kanon}. In particular, in \cite{liu_2011} a well detailed discussion of some particular
models derived from \eqref{eq:navstokes}-\eqref{eq:fronterasup} is given.

\section{Proof of  Theorem~\ref{teo:existence}}

\subsection{Variational formulation}

By the standard arguments the variational formulation of 
\eqref{eq:navstokes}-\eqref{eq:fronterasup} is given by
\begin{align}
&
\mbox{Find $(\bu,n,c)\in  \bV\times  H^1(\Omega)\times H^1(\Omega)$ such that}
\label{eq:weak_form_spaces}
\\
&S_ca_0(\bu,\bv)+b_0(\bu,\bu,\bv)=\gamma S_c (n \mathbf{g}, \bv )
+(\mathbf{F},\bv),\quad \forall\bv\in \bV,
\label{eq:weak_form_ns}\\
&a(n,\phi)+b(\bu,n,\phi)=\chi (n r(c)\nabla c, \nabla \phi )
+(f_n,\phi),
\quad \forall\phi\in H^1(\Omega),
\label{eq:weak_form_bact}\\
&\delta a(c,\varphi)+b(\bu,c,\varphi) 
=-\beta (r(c)n,\varphi)
+\delta \int_{\partial\Omega_U}  \nabla c\cdot \boldsymbol{\nu}\varphi dS
+(f_c,\varphi),
\quad \forall\varphi\in H^1(\Omega).
\label{eq:weak_form_oxi}
\end{align}
We notice that if $f_c=f_n=0$ and $\bu_0$ 
is a solution of  \eqref{eq:navstokes}-\eqref{eq:diver}
with $n=0,$
we have that $(\bu_0,0,0)$ is a solution of 
\eqref{eq:weak_form_spaces}-\eqref{eq:weak_form_oxi}.
However, $(\bu_0,0,0)$ does not describe the bioconvective flow problem and 
we need to study the variational problem when 
the total local concentration of bacteria and the
total local concentration of oxygen are some given  
strictly positive constants, i. e.
$\int_{\Omega}n_{\balpha}d\bx=\alpha_1>0$
and $ \int_{\Omega}c_{\balpha}d\bx=\alpha_2>0.$
Thus, by considering the change of variable
$\hat{n}_{\balpha}=n_{\balpha}-\alpha_1|\Omega|^{-1}$
and 
$\hat{c}_{\balpha}=c_{\balpha}-\alpha_2|\Omega|^{-1},$
we can rewrite \eqref{eq:weak_form_spaces}-\eqref{eq:weak_form_oxi}
as follows
\begin{align}
&\mbox{Given $\balpha=(\alpha_2,\alpha_2)\in ]0,1]\times ]0,1]$
find $(\bu_{\balpha},\hat{n}_{\balpha},\hat{c}_{\balpha})\in 
\bV\times  \tilde{H}^1(\Omega)\times \tilde{H}^1(\Omega):$}
\label{eq:weak_form_spaces:alf:1}
\\
&S_ca_0(\bu_{\balpha},\bv)
+b_0(\bu_{\balpha},\bu_{\balpha},\bv)
=\gamma S_c (\hat{n}_{\balpha} \mathbf{g}, \bv )
+(\mathbf{F},\bv),
\label{eq:weak_form_stokes:alf}
\\
&a(\hat{n}_{\balpha},\phi)+b(\bu_{\balpha},\hat{n}_{\balpha},\phi)=
\chi \left(\Big(\hat{n}_{\balpha}+\frac{\alpha_1}{|\Omega|}\Big)
r\Big(\hat{c}_{\balpha}+\frac{\alpha_2}{|\Omega|}\Big)
\nabla \hat{c}_{\balpha}, \nabla \phi \right)+(f_n,\phi),
\label{eq:weak_form_bacterias:alf}\\
&\delta a(\hat{c}_{\balpha},\varphi)+b(\bu_{\balpha},\hat{c}_{\balpha},\varphi) 
=-\beta \left(r\Big(\hat{c}_{\balpha}+\frac{\alpha_2}{|\Omega|}\Big)
\Big(\hat{n}_{\balpha}+\frac{\alpha_1}{|\Omega|}\Big),\varphi\right)
+\delta \int_{\partial\Omega_U}
\nabla \hat{c}_{\balpha}\cdot \boldsymbol{\nu}\varphi dS
+(f_c,\varphi),
\label{eq:weak_form_oxigeno:alf}
\\
&\mbox{for all $(\bv,\phi,\varphi)\in 
\bV\times  \tilde{H}^1(\Omega)\times \tilde{H}^1(\Omega).$}
\label{eq:weak_form_spaces:alf:2}
\end{align}

\subsection{Some a priori estimates for  $\bu_{\balpha},\hat{n}_{\balpha}$
and $\hat{c}_{\balpha}$}

\begin{proposition}
 \label{prop:dprioriestimates} 
Consider that the hypotheses for existence result 
in Theorem~\ref{teo:existence} are satisfied. 
If we assume that  $(\bu_{\balpha},\hat{n}_{\balpha},\hat{c}_{\balpha})$ is a solution of 
\eqref{eq:weak_form_spaces:alf:1}-\eqref{eq:weak_form_spaces:alf:2},
then  $\|\hat{n}_{\balpha}\|_{\tilde{H}^1(\Omega)} \le \Gamma_0$
with $\Gamma_0$ defined on \eqref{eq:gamma0}
and also are valid the following estimates 
\begin{align}
\|\bu_{\balpha}\|_{\bV} \le C_{poi}\Big(\gamma g \Gamma_0 + \|\bF\|_{\bL^2(\Omega)}\Big),
\qquad 
\|\hat{c}_{\balpha}\|_{\tilde{H}^1(\Omega)} \le
\frac{\Theta_2 C_{poi}}{\delta}\Big[\beta C_{poi}
\|r\|_{L^\infty(\mathbb{R})}
\Gamma_0
+\|f_c\|_{L^2(\Omega)}
\Big].
\label{eq:estima_u_c}
\end{align}
\end{proposition}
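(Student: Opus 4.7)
The plan is to derive the three estimates in a bootstrap fashion by testing each weak equation against its own solution component and using the antisymmetry identities $b_0(\bu,\bu,\bu)=0$ and $b(\bu,\phi,\phi)=0$ from \eqref{eq:prop_B0_B1:2} to annihilate all convective terms at once. What remains is a coupled linear system of scalar inequalities in $\|\bu_{\balpha}\|_{\bV}$, $\|\hat{c}_{\balpha}\|_{\tilde{H}^1(\Omega)}$ and $\|\hat{n}_{\balpha}\|_{\tilde{H}^1(\Omega)}$. The role of the smallness assumptions in \eqref{eq:hip_exist} and of the ratios $\Theta_1,\Theta_2$ in \eqref{eq:tetas} is precisely to render certain trace-type corrections absorbable into the diffusive left-hand sides.

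I would start with the momentum equation: testing \eqref{eq:weak_form_stokes:alf} with $\bv=\bu_{\balpha}$ kills $b_0(\bu_{\balpha},\bu_{\balpha},\bu_{\balpha})$, and Cauchy--Schwarz together with $|\bg|=g$ and Poincar\'e yield a bound on $\|\bu_{\balpha}\|_{\bV}$ that is linear in $\|\hat{n}_{\balpha}\|_{L^2(\Omega)}$; this will deliver the first inequality of \eqref{eq:estima_u_c} once $\|\hat{n}_{\balpha}\|_{\tilde{H}^1}\le\Gamma_0$ is known. Next I would test the oxygen equation \eqref{eq:weak_form_oxigeno:alf} with $\varphi=\hat{c}_{\balpha}$: the convective term vanishes, the reactive term is controlled by $\beta\|r\|_{L^\infty(\mathbb{R})}$, and the boundary integral $\delta\int_{\partial\Omega_U}\nabla\hat{c}_{\balpha}\cdot\boldsymbol{\nu}\,\hat{c}_{\balpha}\,dS$ is handled by applying the $W^{1,1}$-trace inequality to $\hat{c}_{\balpha}^{2}/2$ followed by Cauchy--Schwarz. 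The resulting $C_{tr}C_{poi}$-multiple of $\|\hat{c}_{\balpha}\|_{\tilde{H}^1}^{2}$ is absorbed into the left-hand side by the middle condition of \eqref{eq:hip_exist}, producing $\Theta_2$ and giving the second inequality of \eqref{eq:estima_u_c}, once again linear in $\|\hat{n}_{\balpha}\|$.

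The decisive step is the bacteria equation, tested with $\phi=\hat{n}_{\balpha}$. No boundary term survives here, since \eqref{eq:fronterainf}--\eqref{eq:fronterasup} are designed exactly so that the two surface integrals from integration by parts cancel; the difficulty lies in the chemotactic right-hand side $\chi\bigl((\hat{n}_{\balpha}+\alpha_1/|\Omega|)\,r(\hat{c}_{\balpha}+\alpha_2/|\Omega|)\nabla\hat{c}_{\balpha},\nabla\hat{n}_{\balpha}\bigr)$. I would rewrite this via the primitive $R(s)=\int_0^s r(\tau)\,d\tau$, which satisfies $\|R\|_{L^\infty(\mathbb{R})}\le\|r\|_{L^1(\mathbb{R})}$, and split $n=\hat{n}_{\balpha}+\alpha_1/|\Omega|$ into its fluctuating and mean parts. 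The fluctuating part, after integration by parts and a $W^{1,1}$-trace correction, produces a term of order $2\chi\|r\|_{L^1}C_{tr}C_{poi}\|\hat{n}_{\balpha}\|_{\tilde{H}^1}^{2}$ absorbable again by the middle condition of \eqref{eq:hip_exist}; this is the mechanism that generates $\Theta_1$. The mean part is bounded by $\chi(\alpha_1/|\Omega|)\|r\|_{L^\infty}\|\hat{c}_{\balpha}\|_{\tilde{H}^1}\|\hat{n}_{\balpha}\|_{\tilde{H}^1}$, and substituting the Step~2 estimate (itself linear in $\|\hat{n}_{\balpha}\|$) yields
\[
\|\hat{n}_{\balpha}\|_{\tilde{H}^1}\le A\,\|\hat{n}_{\balpha}\|_{\tilde{H}^1}+B,\qquad A=\frac{\chi\beta\alpha_1\|r\|_{L^\infty(\mathbb{R})}^{2}C_{poi}^{2}\,\Theta_1\Theta_2}{|\Omega|}.
\]
The third condition in \eqref{eq:hip_exist} is exactly the statement $A<1$, so the inequality can be solved to give $\|\hat{n}_{\balpha}\|_{\tilde{H}^1}\le\Gamma_0$ with the precise constant of \eqref{eq:gamma0}. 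Plugging this bound back into Steps~1 and~2 closes the two remaining estimates of \eqref{eq:estima_u_c}.

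The main technical obstacle I anticipate is the simultaneous bookkeeping of the two trace-type corrections -- one arising from the $\partial\Omega_U$ boundary integral in the oxygen weak formulation, the other from the $R$-antiderivative manipulation in the bacteria equation -- and arranging that both match exactly the numerators $1-C_{tr}$ and the coefficients $C_{tr}C_{poi}$ and $2\chi\|r\|_{L^1}C_{tr}C_{poi}$ that appear in \eqref{eq:tetas}. It is at this constant-tracking step that a suboptimal bound could most easily slip in.
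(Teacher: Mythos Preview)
Your proposal is correct and follows essentially the same strategy as the paper: test each weak equation with its own solution component, use the antisymmetry identities \eqref{eq:prop_B0_B1:2} to kill the convective terms, rewrite the quadratic chemotactic contribution $\hat n_{\balpha}\,r(\hat c_{\balpha}+\alpha_2/|\Omega|)\nabla\hat c_{\balpha}\cdot\nabla\hat n_{\balpha}$ via the primitive of $r$ and an integration by parts, control the resulting boundary terms through the $W^{1,1}$-trace inequality (this is exactly where the factors $(1-C_{tr})^{-1}$ and hence $\Theta_1,\Theta_2$ enter), and finally close the coupled linear inequalities using the smallness conditions in \eqref{eq:hip_exist}. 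Your identification of the constants, the role of the splitting $\hat n_{\balpha}+\alpha_1/|\Omega|$, and the absorption mechanism all match the paper's proof; the only cosmetic difference is that the paper writes the trace estimate \eqref{eq:trace_estimation} once for $\hat n_{\balpha}$ and then reuses it verbatim for $\hat c_{\balpha}$, whereas you phrase the oxygen boundary term as a trace of $\hat c_{\balpha}^{2}/2$---these lead to the same bound.
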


\begin{proof}
In order to prove the estimates, we select 
the test functions $(\bv,\phi,\varphi)=(\bu_{\balpha},\hat{n}_{\balpha},\hat{c}_{\balpha})$
in \eqref{eq:weak_form_stokes:alf}-\eqref{eq:weak_form_oxigeno:alf}.
From  \eqref{eq:weak_form_stokes:alf} and \eqref{eq:prop_B0_B1:2} we deduce that
\begin{align}
&\|\bu_{\balpha}\|_{\bV}
\le \gamma  g C^2_{poi}\|\hat{n}_{\balpha}\|_{\tilde{H}^1(\Omega)}
+(S_c)^{-1}C_{poi}\|\mathbf{F}\|_{\mathbf{L}^2(\Omega)}.
\label{eq:estimacion:1}
\end{align}
Now, by the trace inequality and integration by parts,
we have that
\begin{align*}
\int_{\partial\Omega}
|\nabla \hat{n}_{\balpha} \cdot \nu \hat{n}_{\balpha}|dS
\le C_{tr}\|\hat{n}_{\balpha}\nabla \hat{n}_{\balpha} \cdot \nu \|_{W^{1,1}(\Omega)}
\le C_{tr}C_{poi}\|\hat{n}_{\balpha}\|^2_{\tilde{H}^1(\Omega)}
+C_{tr} \int_{\partial\Omega}
|\nabla \hat{n}_{\balpha} \cdot \nu \hat{n}_{\balpha}|dS,
\end{align*}
which implies that 
\begin{align}
\int_{\partial\Omega}
|\nabla \hat{n}_{\balpha} \cdot \nu \hat{n}_{\balpha}|dS
\le \frac{C_{tr}C_{poi}}{1-C_{tr}}\|\hat{n}_{\balpha}\|^2_{\tilde{H}^1(\Omega)}.
\label{eq:trace_estimation}
\end{align}
Here, we have used the fact that $1-C_{tr}>0,$ by a consequence of the
assumption~\eqref{eq:hip_exist}.
Then, by integration by parts we get the following bound 
\begin{align}
&\left(\hat{n}_{\balpha}
r\Big(\hat{c}_{\balpha}+\frac{\alpha_2}{|\Omega|}\Big)
\nabla \hat{c}_{\balpha}, \nabla \hat{n}_{\balpha}\right)
=
\left(
\nabla \left[\int_0^{\hat{c}_{\balpha}} r\Big(m+\frac{\alpha_2}{|\Omega|}\Big)dm\right]
, \nabla \Big(\frac{\hat{n}^2_{\balpha}}{2}\Big)\right)
\nonumber\\
&
\hspace{1cm}
=
-\left(
\int_0^{\hat{c}_{\balpha}} r\Big(m+\frac{\alpha_2}{|\Omega|}\Big)dm
, \Delta \Big(\frac{\hat{n}_{\balpha}}{2}\Big)\right)
+\int_{\partial\Omega}
 \left[\int_0^{\hat{c}_{\balpha}} r\Big(m+\frac{\alpha_2}{|\Omega|}\Big)dm\right]
\nabla \Big(\frac{\hat{n}^2_{\balpha}}{2}\Big)\cdot \nu
dS
\nonumber\\
&
\hspace{1cm}
\le 2\|r\|_{L^1(\mathbb{R})}\int_{\partial\Omega}
|\hat{n}_{\balpha}\nabla \hat{n}_{\balpha} \cdot \nu |dS
\le
\frac{2\|r\|_{L^1(\mathbb{R})}C_{tr}C_{poi}}{1-C_{tr}}
\|\hat{n}_{\balpha}\|^2_{\tilde{H}^1(\Omega)}.
\label{eq:estmacion_int_ene}
\end{align}
From  \eqref{eq:weak_form_bacterias:alf}, using the properties \eqref{eq:prop_B0_B1:2}
and the inequality \eqref{eq:estmacion_int_ene},
we have that
\begin{align*}
&\|\hat{n}_{\balpha}\|^2_{\tilde{H}^1(\Omega)}
=
\chi \left(\hat{n}_{\balpha}
r\Big(\hat{c}_{\balpha}+\frac{\alpha_2}{|\Omega|}\Big)
\nabla \hat{c}_{\balpha}, \nabla \hat{n}_{\balpha}\right)
+
\frac{\chi\alpha_1}{|\Omega|}
\left(
r\Big(\hat{c}_{\balpha}+\frac{\alpha_2}{|\Omega|}\Big)
\nabla \hat{c}_{\balpha}, \nabla \hat{n}_{\balpha}\right)
+(f_n,\phi)
\\
&\;\le
\frac{2\chi\|r\|_{L^1(\mathbb{R})}C_{tr}C_{poi}}{1-C_{tr}}
\|\hat{n}_{\balpha}\|^2_{\tilde{H}^1(\Omega)}
+
\frac{\chi\alpha_1}{|\Omega|}
\|r\|_{L^\infty(\mathbb{R})}
\|\hat{c}_{\balpha}\|_{\tilde{H}^1(\Omega)}
\|\hat{n}_{\balpha}\|_{\tilde{H}^1(\Omega)}
+C_{poi}\|f_n\|_{L^2(\Omega)}\|\hat{n}_{\balpha}\|_{\tilde{H}^1(\Omega)},
\end{align*}
or equivalently, we get the following estimate for $\hat{n}_{\balpha}$
\begin{align}
\|\hat{n}_{\balpha}\|_{\tilde{H}^1(\Omega)}
\le 
\Theta_1
\left[
\frac{\chi\alpha_1}{|\Omega|}
\|r\|_{L^\infty(\mathbb{R})}
\|\hat{c}_{\balpha}\|_{\tilde{H}^1(\Omega)}
+C_{poi}\|f_n\|_{L^2(\Omega)}
\right],
\label{eq:estimacion:2}
\end{align}
with $\Theta_1$ is defined in \eqref{eq:tetas}.
Similarly, from  \eqref{eq:weak_form_oxigeno:alf} 
and \eqref{eq:trace_estimation} with $\hat{c}_{\balpha}$
instead of $\hat{n}_{\balpha}$,
we deduce that
\begin{align}
\|\hat{c}_{\balpha}\|_{\tilde{H}^1(\Omega)}
\le 
\frac{\Theta_2 C_{poi}}{\delta}
\left[\beta C_{poi}
\|r\|_{L^\infty(\mathbb{R})}
\|\hat{n}_{\balpha}\|_{\tilde{H}^1(\Omega)}
+\|f_c\|_{L^2(\Omega)}
\right],
\label{eq:estimacion:3}
\end{align}
where $\Theta_2$ is given in \eqref{eq:tetas}.
Now, 
replacing the estimate \eqref{eq:estimacion:3} 
in \eqref{eq:estimacion:2} and applying~\eqref{eq:hip_exist},
we deduce the existence of $\Gamma_0$ defined in~\eqref{eq:gamma0} 
such that $\|\hat{n}_{\balpha}\|_{\tilde{H}^1(\Omega)} \le \Gamma_0$. 
We notice that the second and third relation in \eqref{eq:hip_exist}
implies that $\Theta_i>1,i=1,2,$
and 
$|\Omega|>\chi \beta \alpha_1\|r\|^2_{L^\infty(\mathbb{R})}
 C^2_{poi}\Theta_1\Theta_2,$ respectively, i.e. $\Gamma>0$
under \eqref{eq:hip_exist}.
Moreover,  from
\eqref{eq:estimacion:1} and \eqref{eq:estimacion:2}, we deduce the estimates
given in \eqref{eq:estima_u_c} for
$\|\bu_{\balpha}\|_{\bV} $
and $\|\hat{c}_{\balpha}\|_{\tilde{H}^1(\Omega)} ,$
concluding the proof of the Proposition.
\end{proof}

\subsection{Proof of Theorem~\ref{teo:existence}}

To prove the existence, we can apply the Gossez theorem \cite{ene_book,Gossez1966}. Indeed, if 
we  define  the mapping 
$G:\bV\times \tilde{H}^1(\Omega)\times \tilde{H}^1(\Omega)\to 
(\bV\times \tilde{H}^1(\Omega)\times \tilde{H}^1(\Omega))'$
by the following relation
\begin{align*}
&<<G(\bu,n,c),(\bv,\phi,\varphi)>>=
\lambda_1\Bigg\{S_ca_0(\bu,\bv)
+b_0(\bu,\bu,\bv)
-\gamma S_c (n \mathbf{g}, \bv )-(\mathbf{F},\bv)\Bigg\}
\\
&
\qquad\qquad
+\lambda_2\Bigg\{a(n,\phi)+b(\bu,n,\phi)
-\chi \left(\Big(n+\frac{\alpha_1}{|\Omega|}\Big)
r\Big(c+\frac{\alpha_2}{|\Omega|}\Big)
\nabla c, \nabla \phi \right)-(f_n,\phi)\Bigg\}
\\
&
\qquad\qquad
+\lambda_3\Bigg\{\delta a(c,\varphi)+b(\bu,c,\varphi) 
+\beta \left(r\Big(c+\frac{\alpha_2}{|\Omega|}\Big)
\Big(n+\frac{\alpha_1}{|\Omega|}\Big),\varphi\right)
-\delta \int_{\partial\Omega_U}
\nabla c\cdot \boldsymbol{\nu}\varphi dS
-(f_c,\varphi)\Bigg\},
\\
&
\hspace{2cm}
\forall (\bu,n,c),(\bv,\phi,\varphi) \in 
\bV\times \tilde{H}^1(\Omega)\times \tilde{H}^1(\Omega),
\end{align*}
with $<<\cdot,\cdot>>$ denoting the duality pairing between  
$\bV\times \tilde{H}^1(\Omega)\times \tilde{H}^1(\Omega)$ and 
$(\bV\times \tilde{H}^1(\Omega)\times \tilde{H}^1(\Omega))'$
and $\lambda_1,\lambda_2$ and $\lambda_3$
are positive fixed constant. From \eqref{eq:prop_B0_B1:2}, \eqref{eq:tetas}
and \eqref{eq:estmacion_int_ene}, we have that
\begin{align*}
&<<G(\bu,n,c),(\bu,n,c)>>\;\;\ge\;\;
\Bigg\{
\lambda_1 S_c\|\bu\|^2_{\bV}
-\lambda_1\gamma S_c g (C_{poi})^2\|n\|_{\tilde{H}^1(\Omega)}\|\bu\|_{\bV}
+\frac{\lambda_2}{3\Theta_1}\|n\|^2_{\tilde{H}^1(\Omega)}\Bigg\}
\\
&
\qquad
+\Bigg\{
\frac{\lambda_2}{3\Theta_1}\|n\|^2_{\tilde{H}^1(\Omega)}
-\frac{\lambda_2\chi\alpha_1}{|\Omega|}
\|r\|_{L^\infty(\mathbb{R})}
\|c\|_{\tilde{H}^1(\Omega)}
\|n\|_{\tilde{H}^1(\Omega)}
+\frac{\lambda_3\delta}{2\Theta_2} \|c\|^2_{\tilde{H}^1(\Omega)}
\Bigg\}
\\
&
\qquad
+\Bigg\{
\frac{\lambda_3\delta}{2\Theta_2} \|c\|^2_{\tilde{H}^1(\Omega)}
-\lambda_3\beta (C_{poi})^2
\|r\|_{L^\infty(\mathbb{R})}
\|c\|_{\tilde{H}^1(\Omega)}
\|n\|_{\tilde{H}^1(\Omega)}
+\frac{\lambda_2}{3\Theta_1}\|n\|^2_{\tilde{H}^1(\Omega)}
\Bigg\}
\\
&
\qquad
-C_{poi}\Bigg\{
\lambda_1\|\mathbf{F}\|_{\bL^2(\Omega)}\|\bu\|_{\bV}
+\lambda_2\|f_n\|_{L^2(\Omega)}\|n\|_{\tilde{H}^1(\Omega)}
+\lambda_3\|f_c\|_{L^2(\Omega)}\|c\|_{\tilde{H}^1(\Omega)}
\Bigg\}:=\Upsilon_1+\Upsilon_2-\Upsilon_3.
\end{align*}
Now, selecting $\lambda_1,\lambda_2,\lambda_3$ and 
$r$ such that
\begin{align*}
&\lambda_1<\frac{4\lambda_2}{3\Theta_1\gamma^2g^2S_c(C_{poi})^4},
&& 
\lambda_2<\frac{4\delta|\Omega|^2\lambda_3}
{6\Theta_1\Theta_2(\chi\alpha_1\|r\|_{L^\infty(\mathbb{R})})^2},
\\
&
\lambda_3<\frac{4\delta\lambda_2}
{6\Theta_1\Theta_2(\beta (C_{poi})^2\|r\|_{L^\infty(\mathbb{R})})^2}
&&
r<\frac{\Upsilon_1+\Upsilon_2}{C_{poi}(\lambda_1\|\mathbf{F}\|_{\bL^2(\Omega)}
+\lambda_2\|f_n\|_{L^2(\Omega)}+\lambda_3\|f_c\|_{L^2(\Omega)})},
\end{align*}
we can prove that $<<G(\bu,n,c),(\bu,n,c)>>$ is positive 
for all $(\bu,n,c)\in \bV\times \tilde{H}^1(\Omega)\times \tilde{H}^1(\Omega)$
such that $\|(\bu,n,c)\|_{\bV\times \tilde{H}^1(\Omega)\times \tilde{H}^1(\Omega)}=r$.
Moreover, we notice that, it is straightforward to deduce that
$G$ is continuous between the weak topologies  of   
$\bV\times \tilde{H}^1(\Omega)\times \tilde{H}^1(\Omega)$ and 
$(\bV\times \tilde{H}^1(\Omega)\times \tilde{H}^1(\Omega))'.$
Thus, there is 
$(\bu,n,c) \in \bar{B}_r(0)
\subset \bV\times \tilde{H}^1(\Omega)\times \tilde{H}^1(\Omega)$
such that $<<G(\bu,n,c),(\bu,n,c)>>= 0$, concluding the proof of existence.

To prove the uniqueness we consider that there is
two solutions $(\bu^i,n^i,c^i),$
$i=1,2$ satisfying
\eqref{eq:weak_form_stokes:alf}-\eqref{eq:weak_form_oxigeno:alf}.
Then, subtracting, selecting the test functions
$(\bv,\phi,\varphi)=(\bu^1-\bu^2,n^1-n^2,c^1-c^2)$,
using \eqref{eq:prop_B0_B1:2}, \eqref{eq:hip_exist}, \eqref{eq:hip_unique_1}
and applying the Proposition~\ref{prop:dprioriestimates}, we get
\begin{align}
&\| \bu^1-\bu^2\|_{\bV}
\le \Gamma_1
\| n^1-n^2\|_{\tilde{H}^1(\Omega)},
\label{eq:es_uniq_uno}\\
&\| n^1-n^2\|_{\tilde{H}^1(\Omega)}
\le 
\Gamma_2
\left[
C_1 \| \bu^1-\bu^2\|_{\bV} 
\|n^1\|_{\tilde{H}^1(\Omega)}
+\|r\|_{L^\infty(\R)}
\| c^1-c^2\|_{\tilde{H}^1(\Omega)}
\right],
\label{eq:es_uniq_dos}\\
&
\| c^1-c^2\|_{\tilde{H}^1(\Omega)}
\le 
C_1\Gamma_3
\left[
 \| \bu^1-\bu^2\|_{\bV} 
\|c^2\|_{\tilde{H}^1(\Omega)}
+(C_1)^2\|r\|_{{\rm Lip}(\R)}
\|n^1\|_{\tilde{H}^1(\Omega)}
\| c^1-c^2\|_{\tilde{H}^1(\Omega)}
\right],
\label{eq:es_uniq_tres}
\end{align}
with $\Gamma_i$ defined on \eqref{eq:gamma0}-\eqref{eq:gamma3}.
From \eqref{eq:es_uniq_tres}, Proposition~\ref{prop:dprioriestimates}
and the first inequality 
in \eqref{eq:hip_unique_2} we have that
\begin{align}
\| c^1-c^2\|_{\tilde{H}^1(\Omega)}
\le 
\frac{C_1\Gamma_3\Theta_2 C_{poi}}{\delta(1-(C_1)^2\|r\|_{{\rm Lip}(\R)}\Gamma_0)}
\Big[\beta C_{poi}
\|r\|_{L^\infty(\mathbb{R})}
\Gamma_0
+\|f_c\|_{L^2(\Omega)}
\Big]
 \| \bu^1-\bu^2\|_{\bV}.
\label{eq:es_uniq_tres_2}
\end{align}
Then, replacing \eqref{eq:es_uniq_tres_2} in  \eqref{eq:es_uniq_dos},
using the Proposition~\ref{prop:dprioriestimates} to 
estimate $\|n^1\|_{\tilde{H}^1(\Omega)}$ we obtain the following bound
$\| n^1-n^2\|_{\tilde{H}^1(\Omega)}\le \Pi(\Gamma_1)^{-1}\| \bu^1-\bu^2\|_{\bV}$
with $\Pi$ defined on \eqref{eq:hip_unique_2}.
Now, using this estimate in \eqref{eq:es_uniq_uno}, we get that 
$\| \bu^1-\bu^2\|_{\bV}\le \Pi \| \bu^1-\bu^2\|_{\bV}$. Thus using the fact that
$\Pi\le 1$ we deduce that $\bu^1=\bu^2$ on $\bV$, 
which also implies that $n^1=n^2$ and $c^1=c^2$ on $\tilde{H}^1(\Omega)$,
concluding the uniqueness proof.

%\newpage
\section*{Acknowledgments}

The authors thanks for the  support of project DIUBB  172409 GI/C at
Universidad del B{\'\i}o-B{\'\i}o, Chile.
AT and IH thanks thanks for the  support of
 Conicyt-Chile through the grants program ``Becas de Doctorado''.

\end{document}